\documentclass[11pt]{article}

\usepackage{amsthm}
\usepackage{amsmath}
\usepackage{amssymb}
\usepackage{enumerate}
\usepackage{fullpage}
\usepackage{graphicx}
\usepackage{caption}
\usepackage{mathtools}

\usepackage{latexsym,epsfig} 
\usepackage[latin1]{inputenc}
\usepackage{graphicx}
\usepackage{amsmath}
\usepackage{amsfonts}
\usepackage{amssymb}
\usepackage{amsthm}
\usepackage{epstopdf}

\makeatother

\newcommand{\fk}[1]{\frac{3#1}{4}}
\newcommand{\bto}[2]{b^{#1 / #2}}
\newcommand{\xto}[3]{{#1}^{#2/#3}}
\newcommand{\loss}[1]{\text{exp}\left(\frac{2}{1-\bto{#1}{4}}+\frac{4\sigma}{\sqrt{\mu}(1-\bto{#1}{8})}\right)}
\newcommand{\epstar}{\epsilon^*}
\newcommand{\prob}{\mathbb{P}}
\newcommand{\epfive}{\frac{\epsilon}{6}}

\newcommand{\prtwo}{\mathbb{P}\big(H(Y_{m_2}) = i\big)}
\newcommand{\prone}{\mathbb{P}\Big(\tau_k \big(H(Y_{m_1}) \big) = i\Big)}

\newtheorem{theorem1}{Theorem}[section]
\newtheorem{lemma1}[theorem1]{Lemma}
\newtheorem{def1}[theorem1]{Definition}
\newtheorem{claim}{Claim}
\newtheorem{fact}[theorem1]{Fact}

\newtheorem{cor}[theorem1]{Corollary}
\newtheorem{obs}[theorem1]{Observation}
\begin{document}

\begin{center}
\uppercase{\bf On the Density of Happy Numbers}
\vskip 20pt
{\bf Justin Gilmer}\\
{ Department of Mathematics, Rutgers, 110 Frelinghuysen Road
Piscataway, NJ 08854, USA}\\
{\tt jmgilmer@math.rutgers.edu}\\ 
\vskip 10pt
\end{center}
\vskip 30pt

\centerline{\bf Abstract}

\noindent

The happy function $H: \mathbb{N} \rightarrow \mathbb{N}$ sends a positive integer to the sum of the squares of its digits. A number $x$ is said to be happy if the sequence $\{H^n(x)\}^\infty_{n=1}$ eventually reaches 1 (here $H^n(x)$ denotes the $n$'th iteration of $H$ on $x$). A basic open question regarding happy numbers is what bounds on the density can be proved. This paper uses probabilistic methods to reduce this problem to experimentally finding suitably large intervals containing a high (or low) density of happy numbers as a subset. Specifically, we show that $\bar{d} > .18577$ and $\underline{d} < .1138$, where $\bar{d}$ and $\underline{d}$ denote the upper and lower density of happy numbers respectively. We also prove that the asymptotic density does not exist for several generalizations of happy numbers.

\date{today}

\section{Introduction}

   It is well known that if you iterate the process of sending a positive integer to the sums of the squares of its digits, you eventually arrive at either $1$ or the cycle
   \[4 \rightarrow 16 \rightarrow 37 \rightarrow 58 \rightarrow 89 \rightarrow 145 \rightarrow 42 \rightarrow 20 \rightarrow 4. \] 
   
   If we change the map, instead sending an integer to the sum of the cubes of its digits, then there are 9 different possible cycles (see Section \ref{sec:cubing}). Many generalizations of these kinds of maps have been studied. For instance, \cite{ref2} considered the map which sends an integer $n$ to the sum of the $e$'th power of its base-$b$ digits. In this paper, we study a more general class of functions. 
   
   \begin{def1} \label{def:def1}
   Let $b > 1$ be an integer, and let $h$ be a sequence of $b$ non-negative integers such that $h(0) = 0$ and  $h(1) = 1$. Define $H: \mathbb{Z^+} \rightarrow \mathbb{Z^+}$ to be the following function: for $n \in \mathbb{Z}^+$, with base-$b$ representation $n = \sum\limits_{i=0}^{k} a_ib^{i}$, $H(n) := \sum\limits_{i=0}^{k} h(a_i)$. We say $H$ is the $b$-happy function with digit sequence $h$.
   \end{def1}
   
   As a special case, the $b$-happy function with digit sequence $\{0,1,2^e, \ldots, (b-1)^e\}$ is called the $(e,b)$-function.
   
   \begin{def1}
   Let $H$ be any $b$-happy function and let $C \subseteq \mathbb{N}$. We say $n \in \mathbb{N}$ is type-$C$ if there exists $k$ such that $H^k(n) \in C$.
   \end{def1}

   For example, for the $(2,10)$-function, happy numbers are type-$\{1\}$. Numbers which are not happy are type-$\{4,16,37,58,89,145,42,20\}$. 
   
    Fix a $b$-happy function $H$ and let $\alpha := \max\limits_{i=0,\dots, b-1}\big(H(i)\big)$. If $n$ is a $d$-digit integer in base-$b$, then $H(n) \leq \alpha d$.
   If $d^*$ is the smallest $d \in \mathbb{N}$ such that $\alpha d < b^{d-1}$, then for all $n$ with $d \geq d^*$ digits, $H(n) \leq \alpha d < b^{d-1} \leq n$. This implies the following
   
   \begin{fact}
    For all $ n \in \mathbb{N}$, there exists an integer $i$ such that $H^i(n) < b^{d^*}-1$.
   \end{fact}
      
    Moreover, to find all possible cycles for a $b$-happy function, it suffices to perform a computer search on the trajectories of the integers in the interval $[0,b^{d^*} -1]$. 
    
    Richard Guy asks a number of questions regarding $(2,10)$-happy numbers and their generalizations, including the existence (or not) of arbitrarily long sequences of consecutive happy numbers and whether or not the asymptotic density exists [4, problem E34]. To date, there have been a number of papers in the literature addressing the former question (\cite{ref1},\cite{ref2},\cite{ref3}). This paper addresses the latter question. Informally, our main result says that if the asymptotic density exists, then the density function must quickly approach this limit. 
    
    \begin{theorem1} \label{mainthm}
     Fix a $b$-happy function $H$. Let $I$ be a sufficiently large interval and let $S \subseteq I$ be a set of type-$C$ integers. If $\frac{|S|}{|I|} = d$, then the upper density of type-$C$ integers is at least $d\left(1 - o(1)\right)$.
    \end{theorem1}
    
    Note as a corollary we can get an upper bound on the lower density by taking $C$ to be the union of all cycles except the one in which we are interested. In Sections 3 and 4 we will define explicitly what constitutes a sufficiently large interval and provide an expression for the $o(1)$ term. Using Theorem \ref{mainthm}, one can prove the asymptotic density of $(e,b)$-happy numbers (or more generally type-$\{C\}$ numbers) does not exist by finding two large intervals $I_1, I_2$ for which the density in $I_1$ is large and in $I_2$ is small. In the case of $(2,10)$-happy numbers, taking $I_1 = [10^{403},10^{404}-1]$ and $I_2 = [10^{2367},10^{2368}-1]$, we show that $\bar{d} \geq .185773(1-10^{-49})$ and $\underline{d} \leq  .11379(1+10^{-100})$ respectively.  
    
    We also show that the asymptotic density does not exist for $8$ of the cycles for the $(3,10)$-function (see Section 5). It should be noted that our methods only give one sided bounds. In an earlier version of this manuscript, we asked if $\bar{d} < 1$ for $(2,10)$-happy numbers. Recently, \cite{Moews} has announced a proof of this. Specifically, he proves that $.1962 < \bar{d} < .38$, and $0.002937 < \underline{d} < .1217$.

\section{Preliminaries}
  Throughout the paper we regard an interval $I = [a,b]$ as a set of integers  $\{n \in \mathbb{Z}^+ : a \leq n \leq b\}$ where, in general, $a,b \in \mathbb{R}$. We denote $|I|$ to be the cardinality of this set. We also denote the set $\{0,1,\dots,n\}$ by $[n]$. Throughout this section let $H$ denote an arbitrary $b$-happy function with digit sequence $h$.
  
  \begin{def1}
    Let $I$ be an interval and $Y$ the random variable uniformly distributed amongst the set of integers in $I$. Then we say the random variable $Y$ is induced by the interval $I$.
  \end{def1}

   \begin{def1}
   The type-$C$ density of an integer interval $I$ is defined to be the quantity 
   \[d_C(I):=\frac{|\{n \in I : n \text{ is  type-}C\}|}{|I|}.\]
   \end{def1}
   \begin{obs}
   If $Y$ is the random variable induced by an interval $I$, then  \[ d_C(I) =\mathbb{P}\big(H(Y) \text{ is type-C}\big).\] 
   \end{obs}
   Usually, we take $C$ to be one of the cycles arising from a $b$-happy function $H$. However, if we wish to upper bound the lower density of type-$C$ integers, then we study the density of type-$C'$ integers, where $C'$ is the union of all cycles except $C$.
   
   \subsection{The Random Variable $H(Y_m)$}
   Consider the random variable $Y_m$ induced by the interval $[0,b^m-1]$, i.e., $Y_m$ is a random $m$-digit number. If $X_i$ is the random variable corresponding to the coefficient of $b^i$ in the base-$b$ expansion of $Y_m$, then 
   \begin{equation} \label{eq:eq3}
      H(Y_m) = \sum\limits_{i=0}^{m-1}h(X_i).
   \end{equation}
   In this paper, we will be interested in the mean and variance of the $h(X_i)$ (i.e., the image of a random digit) which we refer to as the \textit{digit mean} ($\mu$) and \textit{digit variance} ($\sigma^2$) of $H$.
   The random variables $h(X_i)$ in (\ref{eq:eq3}) are all independent and identically distributed (i.i.d.), thus,
   
   \begin{equation} \label{eq:eq4}
     \textbf{E}[H(Y_m)] = \mu m \qquad \textbf{ Var}[H(Y_m)] = \sigma^2 m.
   \end{equation}
   
   The random variable $H(Y_m)$ is equivalent to rolling $m$ times a $b$-sided die with faces $0 ,1,h(2),\dots,h(b-1)$ and taking the sum. Since it is a sum of $m$ i.i.d. random variables, it approaches a normal distribution as $m$ gets large. Also, the distribution of $H(Y_m)$ is concentrated near the mean. This observation leads to the following key insight which underlies the proofs in this work: {\bf For a sufficiently large integer $m$, the density of type-$C$ integers among $m$ digit integers is approximately determined by the set of type-$C$ integers near $\mu m$.}

   \subsection{Computing Densities}
   In order to apply Theorem \ref{mainthm} it is necessary to compute the number of $m$-digit integers which are type-$C$ for $m$ large. In this section we discuss how this can be done efficiently (even for $m \geq 1000$).
   
   Let $P_{m,i} := \mathbb{P}\big(H(Y_m) = i\big)$. Then
   \[P_{m,i} = \frac{\Big| \{(a_1,a_2,\dots,a_m) : a_k \in h \text{ and } \sum\limits_{k=1}^{m} a_k = i \} \Big|}{b^{m}}. \]
   For fixed $m$, the sequence $\{P_{m,i}\}_{i=1}^{\infty}$ has generating function 
   \begin{equation} \label{eq:genf}
    f_m(x) = \sum\limits_{i=0}^{\infty}P_{m,i}x^i = \left( \frac{1 + x + x^{h(2)} + \dots + x^{h(b-1)}}{b} \right)^m.
   \end{equation}
   
   This implies the following recurrence relation with initial conditions $P_{0,0} = 1$, and $P_{0,i} = 0$ for $i \in \mathbb{Z}- \{0\} $.
   
   \begin{equation} \label{eq:eq2}
    P_{m,i} = \frac{P_{m-1,i} + P_{m-1,i-1} + P_{m-1,i-h(2)} + \dots + P_{m-1,i-h(b-1)}}{b}.
   \end{equation}
   
    To see this, write $f_m(x) = \left( \frac{1 + x + x^{h(2)} + \dots + x^{h(b-1)}}{b} \right)^{m-1}\left( \frac{1 + x + x^{h(2)} + \dots + x^{h(b-1)}}{b} \right)$ and consider the coefficient of $x^i$.

   If $\alpha = \max\limits_{i=0,\dots,b-1} \big(h(i)\big)$, then $H(Y_m) \subseteq [0,m\alpha]$. In particular, $P_{m,i} = 0$ if $i > m\alpha$. Using this fact combined with (\ref{eq:eq2}), we can implement the following simple algorithm for quickly calculating the type-$C$ density of the  interval $[0,b^m-1]$.
   
   \begin{enumerate}
   \item First, using the recurrence (\ref{eq:eq2}), calculate $P_{m,i}$ for $i = 0, \dots, m\alpha$.
   \item Using brute force, find the type-C integers in the interval $[0, m\alpha]$. 
   \item Output $\sum\limits_{\substack{i \in [0, m\alpha] \\ i \text{ type-C}}}P_{m,i}$. 
   \end{enumerate}
   
   Using this algorithm, calculating the density for large $m$ becomes computationally feasible. Figure 1 graphs the density of $(2,10)$-happy numbers $< 10^m$ for $m$ up to $8000$. 
   
   \begin{figure}[htb]
\centering
  \caption{Relative Density of $(2,10)$-Happy Numbers $< 10^m$}
  \includegraphics[scale=.7]{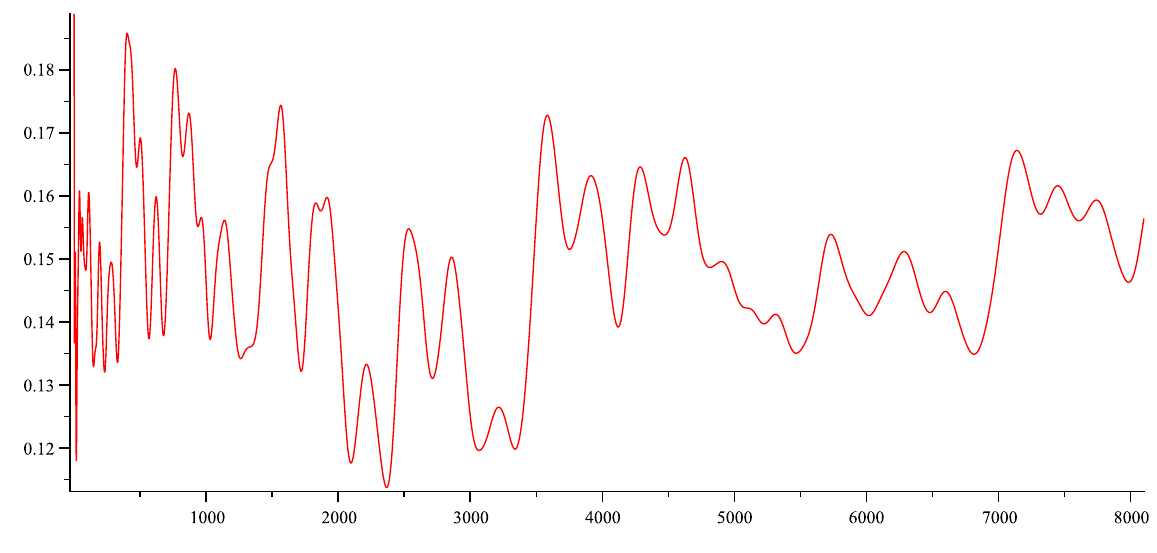} 
\end{figure}
   The peak near $10^{400}$ and valley near $10^{2350}$ will be used to imply the bounds obtained in this paper.

   \subsection{A Local Limit Law}
   The random variable $H(Y_m)$ approaches a normal distribution as $m$ becomes large. The following theorem\footnote{We quote a simpler version, with a minor typo corrected}, presented in [2, p. 593], gives a bound.
   \begin{theorem1} \label{thm:thm5}
    (Local limit law for sums). Let $X_1,\dots,X_n$ be i.i.d. integer-valued variables with probability generating function (PGF) $B(z)$, mean $\mu$, and variance $\sigma^2$, where it is assumed that the $X_i$ are supported on $\mathbb{Z}^+$. Assume that $B(z)$ is analytic in some disc that contains the unit disc in its interior and that $B(z)$ is aperiodic with $B(0) \neq 0$. Then the sum,
    \[S_n := X_1 + X_2 + \dots + X_n \]
    satisfies a local limit law of the Gaussian type: for $t$ in any finite interval, one has
    \[ \mathbb{P}(S_n = \lfloor \mu n + t \sigma \sqrt{n} \rfloor) = \frac{e^{-t^2/2}}{\sqrt{2\pi n}\sigma}\left(1 + O(n^{-1/2})\right). \]. 
    \end{theorem1}
    
    Here aperiodic means that the $\gcd\{j : b_j > 0, j > 0\} = 1$, where $B(z) = \sum\limits_{j = 0}^{\infty}b_j z^j$ (or more informally, the digit sequence for $H$ cannot all be divisible by some integer larger than $1$). In our case, the PGF of the $H(X_i)$ is the polynomial 
    \[ p(x) = \frac{x^{H(0)} + x^{H(1)} + \dots + x^{H(b-1)}}{b}. \]
    It is important in our definition of $b$-happy functions that we assume that $H(0) = 0$, and $H(1) = 1$. This guarantees that $p(x)$ is aperiodic and in particular that the above theorem applies for the sum $H(Y_m)$. As a consequence, for a fixed interval $[-T,T]$, if $i = \mu m + t \sigma \sqrt{m}$ for some $t \in [-T,T]$, then 
    \[ P_{m,i} = \frac{e^{-t^2/2}}{\sqrt{2\pi m}\sigma}\left(1 + O\left(m^{-1/2}\right)\right).\]
   
    The above error term, $O\left(m^{-1/2}\right)$, will prove to be a technical difficulty which will be discussed later.
    \subsection{Overview of the Proof}
    The following heuristic will provide the general motivation for the proofs. Recall that the random variable $Y_m$ is concentrated near its mean $\mu m$. 
    \begin{obs} \label{rem:rem1}
      Suppose $I$ is a large interval with type-$C$ density $d$. Consider the choices of $m$ such that the mean of $H(Y_m)$ is in the interval $I$, then for some choices of $m$ we likely have 
     \[ \mathbb{P}\big(H(Y_m) \text{ is type-}C\big) \geq d. \]
    \end{obs}
     The key idea to turn this heuristic into a proof is to average over all reasonable choices of $m$ in order to imply there is an $m$ with the desired property.
    
     We will use Theorem \ref{thm:thm5} to show that, for small $k$, $H(Y_m)$ and $H(Y_{m+k})$ have essentially the same distribution only shifted by a factor of $\mu k$. Thus, as $k$  varies, the distributions $H(Y_{m+k})$ should uniformly cover the interval $I$. It is crucial here to use the fact that $H(Y_m)$ is locally normal, otherwise the proof will fail. For example, suppose all the happy numbers in $I$ are odd. In this case, if $H(Y_m)$ is not locally normal and instead is supported on the even numbers for all $m$, then every shift $H(Y_{m+k})$ will miss all of the happy numbers in $I$.
     
      Unfortunately, the fuzzy term in the local limit law prevents us from obtaining explicit bounds on the error (and any explicit bounds seem unsatisfactory for our purposes). Section 3 adds a necessary step, which is to construct an interval within $[b^{n-1},b^n]$ with high type-$C$ density for $n$ arbitrarily large. The trick is to consider intervals of the form $I_k :=[1^k 0^{n-k}, 1^k 0^{m} (b-1)^{n-k-m}]$ (here $1^k$ denotes $k$ consecutive 1's). This solves the issue where $H(Y_m)$ and $H(Y_{m+k})$ are not exact shifts of each other, as the distributions induced by the $I_k$ \emph{are} exact shifts under the image of $H$. These distributions will uniformly cover the base interval $I$ with much better provable bounds. The main result is presented in Section 4, the proof uses the local limit law with the result from Section 3.

\section{Constructing Intervals}
  Throughout this section, if $Y$ is a random variable and $k$ is an integer, let $\tau_k(Y)$ denote the random variable $Y+k$.
  \begin{def1}
     We say an integer interval $I$ is $n$-strict if $I \subseteq [b^{n-1},b^n-1]$ and $ |I| = \bto{3n}{4}$.
  \end{def1}
  
  The primary goal of this section is to construct $n$-strict intervals of high type-C density for arbitrarily large $n$.
  
  Our choice of the definition of $n$-strict is only for the purpose of simplifying calculations, there is nothing special about the value $\frac{3}{4}$. In fact, any ratio $> \frac{1}{2}$ would work. Note if $4$ does not divide $n$, then no $n$-strict intervals exist.
  
 For the entirety of this section we will make the following assumptions:
 
 \begin{itemize} \label{eq:Hcond}
  \item $H$ is a $b$-happy function with digit mean $\mu$ and digit variance $\sigma^2$.
  \item We wish to lower bound the upper density of type-C integers for some $C \subset \mathbb{N}$.
  \item We have found, by computer search, an appropriate starting interval $I_1$, which is $n_1$-strict and has suitably large type-C density $d_C(I_1)$.
 \end{itemize}
 
  The results in this section apply only if this $n_1$ is sufficiently large, so we state here exactly how large $n_1$ must be so one knows where to look for the interval $I_1$. In particular, we say an integer $n$ satisfies the bounds (\textbf{B}) if
  
   \begin{description}
    \item[B1:] $4\left(1+3\mu + \sqrt{2}\sigma\bto{5n}{8}\right) \leq b^{n-1}$,
    \item[B2:] $\sqrt{3\mu b}\sigma \leq \bto{3n}{8}$,
    \item[B3:] $4\mu\left(3\mu + 1 + \bto{3n}{4} + 2\sigma\xto{\mu}{-1}{2}\bto{5n}{8}\right) \leq b^{n-1}$.
   \end{description}
  
  Generally, $n$ need not be too large to satisfy these bounds. For example, if $H$ is a $(2,10)$-happy function, assuming $n>13$ is enough to guarantee that it satisfies bounds (\textbf{B}). This is well within the scope of the average computer as it is possible to compute the density of type-$C$ integers in $[0,b^n-1]$ for $n$ up to (and beyond) $1000$ using the algorithm in Section 2. These bounds are necessary in the proof of Theorem \ref{thm:thm2}.
    
  Our first goal is to use an arbitrary $n$-strict interval $I$ to construct a second interval, $I_2$, which is $n_2$-strict for some $n_2$ much larger than $n$ and contains a similar density of type-C integers as $I$. The next lemma will be a helpful tool.
   
 \begin{lemma1} \label{lem:lem1}
   Let $I := [i_1,i_2]$, $J := [j_1,j_2]$ be integer intervals. Let $S \subseteq I$ and $Y$ be an integer-valued random variable whose support is in $J$. For $k \in \mathbb{Z}$, denote the random variable $Y+k$ as $\tau_k(Y)$. Then there exists an integer $k \in [i_1-j_2,i_2-j_1]$ such that $\prob\big(\tau_k(Y) \in S\big) \geq \frac{|S|}{|I|+|J|-1}$.
  \end{lemma1}
  \begin{proof}
   The idea of the proof is that by averaging over all appropriate $k$, the distributions of $\tau_k(Y)$ should uniformly cover $I$. More formally, let $k_1 :=  i_1 - j_2$, $k_2 := i_2 - j_1$, and let $K$ be the set of integers in the interval $[k_1,k_2]$. Note that $|K| = |I| + |J| - 1$.  Pick $k$ uniformly at random from $K$ and consider the random variable $Z := \mathbb{P}\big(\tau_k(Y) \in S\big)$.  Then
  \[\textbf{E}[Z] = \frac{1}{|K|} \sum_{k=k_1}^{k_2} \mathbb{P}\big(\tau_k(Y) \in S\big)\]
  \[ = \frac{1}{|I|+|J| - 1} \sum_{k=k_1}^{k_2} \sum_{i \in S} \mathbb{P}\big(\tau_k(Y) = i\big)\]
  \begin{equation} \label{eq:eq1}
   = \frac{1}{|I|+|J| - 1} \sum_{i \in S} \sum_{k=k_1}^{k_2} \mathbb{P}(\tau_k(Y) = i).
  \end{equation}
  Note that $\mathbb{P}\big(\tau_k(Y) = i\big) = \mathbb{P}\big(Y = i - k\big)$ and for $i \in S \subseteq I$ we have,
  \[J \subseteq [i-k_2,i-k_1].\]
   Thus, for all $i \in S$,
  \[ \sum\limits_{k=k_1}^{k_2} \mathbb{P}\big(\tau_k(Y) = i\big) = \mathbb{P}(Y \in [i-k_2,i-k_1]) = 1.\]
   Therefore,
  \[ (\ref{eq:eq1}) = \frac{|S|}{|I|+|J| - 1}.\]
  So there exists $k$ such that $\mathbb{P}\big(\tau_k(Y) \in S\big) \geq \textbf{E}[Z] = \frac{|S|}{|I|+|J| - 1}$.
  
  \end{proof}
  
  Using Lemma \ref{lem:lem1}, we will not lose much density assuming that $|I|$ is much larger than $|J|$. However, if $Y_m$ is induced by the interval $[0, b^m -1]$, then the random variable $H(Y_m)$ will be supported on a set $J$ that is much too large. As a result, it will be more useful to consider a smaller interval where the bulk of the distribution lies.
  
  \begin{lemma1} \label{lem:lem2}
   Let $Y$ be an integer-valued random variable with mean $\mu_Y$ and variance $\sigma_Y^2$, and let $\lambda > 0$. Let $S \subseteq [i_1,i_2] = I$ be a set of integers where $|S|/|I| = d$. Then there exists an integer $k \in [i_1 - (\mu_Y +\sigma_Y \lambda), i_2 - (\mu_Y - \sigma_Y \lambda)]$ such that 
   \[\mathbb{P}\big(\tau_k(Y) \in S\big) \geq \left(1-\frac{1}{\lambda^2}\right)\Big(\frac{d}{1 + \frac{2 \sigma_Y \lambda }{|I|}}\Big).\]
   \end{lemma1}
   \begin{proof}
    By Chebyshev's Inequality\footnote{We certainly could do better than Chebyshev's Inequality here. However, the bounds it gives will suit our purposes fine.} we have $\mathbb{P}(|Y - \mu_Y| < \sigma_Y\lambda) > 1 - \frac{1}{\lambda^2}$. Let $Y'$ be $Y$ conditioned on being in the interval $J := [ \mu_Y - \sigma_Y\lambda, \mu_Y + \sigma_Y\lambda]$. Note that \[|J| \leq 2 \sigma_Y\lambda +1.\] Then, by Lemma \ref{lem:lem1}, there exists $k \in [i_1 - (\mu_Y + \sigma_Y \lambda), i_2 - (\mu_Y - \sigma_Y \lambda)]$ such that $\prob\big(\tau_k (Y') \in S\big) \geq \frac{|S|}{|I| + |J| -1} = \frac{d}{1 + \frac{2 \sigma_Y \lambda}{|I|}}$.
    Therefore, we have \[\prob\big(\tau_k (Y) \in S\big) \geq \prob(Y \in J)\prob\big(\tau_k (Y') \in S\big) \geq \left(1-\frac{1}{\lambda^2}\right)\left(\frac{d}{1 + \frac{2 \sigma_Y \lambda}{|I|}}\right).\]
   \end{proof}
   
   It is possible to construct sets of intervals which, under the image of $H$, act as shifts of each other. For example, in base-10 (recall $H(0) = 0, H(1) = 1$) if the random variable $X_1$ is induced by $[1100,1199]$ and $X_2$ is induced by $[0,99]$, then $H(X_1) =  H(X_2)+ 2$.
   
   We will now further expand on the example above. Let $n \in \mathbb{N}$ be divisible by 4. Let $B_0 := [0, \bto{3n}{4}-1]$ and, for $k = 1, \dots, \frac{n}{4}$, consider the interval
   \[B_k := [b^{n-1} + b^{n-2} + \dots + b^{n-k},b^{n-1} + b^{n-2} + \dots + b^{n-k} + \bto{3n}{4}-1].\]
   Then the intervals $B_k$ will all be $n$-strict (with exception of $B_0$), and a random integer $x \in B_k$ will have the following base-$b$ expansion:
   \[ x = \underbrace{11\dots 1}_{ \text{ k digits}}\underbrace{00\dots 0}_{ \frac{n}{4}-k \text{ digits}}\underbrace{X_{i}X_{i-1}\dots X_1}_{ \frac{3n}{4} \text{ digits}}. \]
   That is, $x$ will have its first $k$ digits equal to $1$, the next $\frac{n}{4}-k$ digits equal to $0$, and the remaining $\frac{3n}{4}$ digits will be i.i.d. random variables $X_i$ taking values uniformly in the set $\{0, 1, \dots, b-1\}$.
   
   For $k = 0,\cdots, \frac{n}{4}$ let $Y_k$ be the random variable which is uniform in $B_k$. Note that
   \[H(Y_k) = H(Y_0) + k = \tau_k\left(H(Y_0)\right). \]
   
   Recall $H(Y_0)$ has mean $\frac{3n}{4}\mu$, and variance $\frac{3n}{4}\sigma^2$. Consider an interval $I = [i_1,i_2]$ containing a set of type-$C$ integers $S$, and let $\lambda > 0$. By Lemma \ref{lem:lem2}, there exists $k' \in \Big[i_1 - \Big(\frac{3n}{4}\mu + \sqrt{\frac{3n}{4}}\lambda\sigma\Big), i_2 - \Big(\frac{3n}{4}\mu - \sqrt{\frac{3n}{4}}\lambda\sigma\Big)\Big]$ such that
   \begin{equation} \label{eq:eq5}
    \mathbb{P}\Big(\tau_{k'}\big(H(Y_0)\big) \in S\Big) \geq \Big(1-\frac{1}{\lambda^2}\Big)\Big(\frac{d_C(I)}{1+\frac{\sqrt{3n}\lambda\sigma}{|I|}}\Big).
   \end{equation}
    Thus, if $I \subseteq \Big[1 + \frac{3}{4}n\mu + \lambda\sigma\sqrt{\frac{3}{4}n}, \frac{1}{4}n + \frac{3}{4}n\mu - \lambda\sigma\sqrt{\frac{3}{4}n}\Big]$, then $1 \leq k' \leq \frac{n}{4}$. Setting $k := k'$ produces  the interval $B_k$, which will be $n$-strict with 
    \[d_C(B_k) \geq \Big(1-\frac{1}{\lambda^2}\Big)\Big(\frac{d}{1+\frac{\sqrt{3n}\lambda\sigma}{|I|}}\Big).\]
     In fact, we have proven the following

   \begin{theorem1} \label{thm:thm1}
   Let $n \in \mathbb{N}$ be divisible by 4 and let $C \subset \mathbb{N}$. For $\lambda > 0$, define $J_{n,\lambda} := \Big[1 + \frac{3}{4}n\mu + \lambda\sigma\sqrt{\frac{3}{4}n}, \frac{1}{4}n + \frac{3}{4}n\mu - \lambda\sigma\sqrt{\frac{3}{4}n}\Big]$. Fix an interval $I \subseteq J_{n,\lambda}$. Then there exists an $n$-strict interval, $I_2$, such that $d_C(I_2) \geq d_C(I) \Big(1-\frac{1}{\lambda^2}\Big)\Big(\frac{1}{1 + \frac{ \sqrt{3n} \sigma \lambda}{|I|}}\Big)$.
   \end{theorem1} 
   
   The goal for the rest of the section is to use the previous theorem iteratively to construct a sequence of intervals $\{I_i\}_{i=1}^{\infty}$, each with high type-$C$ density, such that each $I_i$ is $n_i$-strict and the sequence $\{n_i\}_{i=1}^{\infty}$ grows quickly. One technical issue to worry about is that $d_C(I_{i+1}) < d_C(I_i)$ for all $i$. How much smaller $d_C(I_{i+1})$ is depends on how large we choose $\lambda_i$ to be in each step. We wish to choose $\lambda_i$ as large as possible, but choose $\lambda_i$ too large and two bad things can happen: First, $I_i$ will not be contained in $J_{n_{i+1},\lambda_i}$ for any choice of $n_{i+1}$. Second, $\frac{ \sqrt{3n} \sigma \lambda_i}{|I|}$ will not be small. We are helped by the fact that the sequence $\{n_i\}_{i=1}^{\infty}$ will grow super exponentially (in fact, $n_{i+1} = \Omega(b^{n_i})$). Choosing $\lambda_i = b^{n_i/8}$ in each step will work well; however, we will need the initial $n_1$ to be sufficiently large. The next theorem gives precise calculations. The proof follows from a number of routine calculations and estimations, some of which we have left for the appendix.
   
    \begin{theorem1} \label{thm:thm2}
    Suppose $I$ is $n$-strict, where $n$ satisfies bounds (\textbf{B}). Then there exists $n_2 \geq \frac{b^{n-1}}{\mu}$, and an $n_2$-strict interval $I_2$ such that 
    \[d_C(I_2) \geq d_C(I) \left(1-\bto{-n}{4}\right)\left(1- \frac{2\sigma}{\sqrt{\mu}}\bto{-n}{8}\right).\]
    \end{theorem1}
    \begin{proof}
    As before, let $J_{m,\lambda} := \left[1+\frac{3}{4}m\mu + \lambda \sigma \sqrt{\frac{3}{4}m}, \frac{1}{4}m + \frac{3}{4}m\mu - \lambda \sigma \sqrt{\frac{3}{4}m}\right]$. We assumed that $I$ is $n$-strict, so $|I| = \bto{3n}{4}$. Write $I$ as $[a,a+b^{3n/4}-1]$. Setting $\lambda := b^{n/8}$, we attempt to find an $n_2$ divisible by 4 such that $I \subseteq J_{n_2,\lambda}$. It would be prudent to consider $f(m) := 1 + \frac{3}{4}m\mu + \lambda\sigma\sqrt{\frac{3}{4}m}$, which is the left endpoint of $J_{m,\lambda}$. We first find an integer $n_2$ such that $f(n_2) \leq a$ and $a - f(n_2)$ is small. 
   By Lemma \ref{lem:A4} in the Appendix, assuming $n$ satisfies bounds (\textbf{B}), it follows that there exists $n_2$ such that:
   \begin{itemize}
   \item $4 \mid n_2$,
   \item $\frac{b^{n-1}}{\mu} \leq n_2 \leq \frac{4}{3\mu}b^{n}$,
   \item $0 \leq a - f(n_2) \leq 3\mu + 1$.
   \end{itemize}
   
    We now check that $I \subseteq J_{n_2,\lambda}$ in order to invoke Theorem \ref{thm:thm1}. 
    We already have that the left endpoint $f(n_2) \leq a$. It remains to check the right endpoints of $I$ and $J_{n_2}$. We need to show that
    \begin{equation} \label{eq:eq12}
    a-1 + \bto{3n}{4} \leq \frac{n_2}{4} + \frac{3n_2}{4}\mu - \lambda\sigma\sqrt{\fk{n_2}}.
    \end{equation}
    The above is equivalent to
    \[ a-\left(\fk{n_2}\mu + \lambda\sigma\sqrt{\fk{n_2}} + 1\right) + \bto{3n}{4} \leq \frac{n_2}{4} - 2 \lambda \sigma \sqrt{\fk{n_2}}. \]
    Simplifying, the above follows from showing that
    \[a - f(n_2) + \bto{3n}{4} + 2 \lambda \sigma \sqrt{\fk{n_2}} \leq \frac{n_2}{4}. \]
    Now let
    \[ \text{LHS} := a - f(n_2) + b^{3n/4} + 2\lambda \sigma \sqrt{\fk{n_2}}.\]
    Then
    \[ \text{LHS} \leq 3\mu + 1 + b^{3n/4} + \lambda\sigma \sqrt{3n_2}. \]
    Using the facts that $\lambda = b^{n/8}$ and $n_2 \leq \frac{4b^{n}}{3\mu}$, we get that
    \[ \text{LHS} \leq 3\mu + 1 + b^{3n/4} + 2\frac{\sigma}{\sqrt{\mu}} b^{5n/8}. \]
    
    Now consider $\text{RHS} := \frac{n_2}{4}$. By the assumptions on $n_2$, we have
    \[\text{RHS} \geq \frac{b^{n}}{4b\mu}. \]
    So (\ref{eq:eq12}) follows from showing that
    \[ 3\mu + 1 + b^{3n/4} + 2\frac{\sigma}{\sqrt{\mu}} b^{5n/8} \leq \frac{b^{n}}{4b\mu}. \]    
    The above is exactly the bound (\textbf{B3}). Therefore, $I \subseteq J_{n_2,\lambda}$. Thus, by applying Theorem \ref{thm:thm1} with $\lambda = \bto{n}{8}$, there exists an $n_2$-strict interval $I_2$  such that 
    
     \[   d_C(I_2) \geq d_C(I) \Big(1-\frac{1}{\bto{n}{4}}\Big)\Big(\frac{1}{1 + \frac{ \sqrt{3n_2} \sigma       \bto{n}{8}}{\bto{3n}{4}}}\Big). \]
    
    Since $n_2 \leq \frac{4}{3\mu}b^n$, it follows that
    \[\frac{1}{1 + \frac{ \sqrt{3n_2} \sigma \bto{n}{8}}{\bto{3n}{4}}} \geq \frac{1}{1+\frac{2\sigma}{\sqrt{\mu}}\bto{-n}{8}} \geq 1 - \frac{2\sigma}{\sqrt{\mu}}\bto{-n}{8}.\]
    Thus, we conclude that $d_C(I_2) \geq d_C(I) \left(1-\bto{-n}{4}\right)\left(1- \frac{2\sigma}{\sqrt{\mu}}\bto{-n}{8}\right)$.
    \end{proof}
    Apply the previous theorem to our starting $n_1$-strict interval $I_1$ to get an $n_2$-strict interval $I_2$. Since $n_2 > n_1$, we can apply Theorem \ref{thm:thm2} again on $I_2$. Continuing in this manner produces a sequence of integers $\{n_i\}_{i=1}^{\infty}$ and $n_i$-strict intervals $\{I_i\}_{i=1}^{\infty}$ such that, for all $i$:
    \begin{itemize}
      \item $n_{i+1} \geq \frac{b^{n_i-1}}{\mu}$,
      \item $d_C(I_{i+1}) \geq d_C(I_i)\left(1-\bto{-n_i}{4}\right)\left(1- \frac{2\sigma}{\sqrt{\mu}}\bto{-n_i}{8}\right).$
    \end{itemize}
    
    The second condition implies that, for all $i$,
    \begin{equation} \label{eq:eq6}
    d_C(I_i) \geq d_C(I_1)\prod\limits_{i=1}^{\infty}\left(\left(1-\bto{-n_i}{4}\right)\left(1- \frac{2\sigma}{\sqrt{\mu}}\bto{-n_i}{8}\right)\right).
    \end{equation}
    
    The following fact will help simplify the above expression. For positive real numbers $x$ and $\alpha$, if $x \geq 2\alpha > 0$, then
    \[ 1 - \alpha x^{-1} \geq \frac{1}{1+2\alpha x^{-1}} \geq e^{-2\alpha x^{-1}}. \]
    Therefore, (\ref{eq:eq6}) implies that
    
    \[ d_C(I_i) \geq d_C(I_1) \cdot \text{exp}\left(\sum\limits_{i=1}^\infty -2\bto{-n_i}{4} - \frac{4\sigma}{\sqrt{\mu}}\bto{-n_i}{8}\right). \]
    For all $i \in \mathbb{N}$, it holds that $n_i \geq in_1$ (it may happen that $n_2 < 2n_1$ if $\mu$ is very large,  but assuming the bounds (\textbf{B}) this will not be the case). The sum in the previous inequality is the sum of two geometric series, one with ratio $r = \bto{-n_1}{4}$ and first term $a = -2\bto{-n_1}{4}$. The second has $r =  \bto{-n_1}{8}$ and $a = \frac{-4\sigma}{\sqrt{\mu}}\bto{-n_1}{8}$. Recall that an infinite geometric series with $|r| < 1$, and first term $a$ sums to
    \[ \frac{a}{1-r}. \]
     Therefore, the first series sums to $\frac{-2\bto{-n_1}{4}}{1-\bto{-n_1}{4}}$, the second sums to $\frac{-4\sigma\bto{-n_1}{8}}{\sqrt{\mu}(1-\bto{-n_1}{8})}$. After simplifying we conclude that, for all $i$,
    \[  d_C(I_i) \geq d_C(I_1) \cdot \text{exp}\left({\frac{-2}{\bto{n_1}{4}-1} + \frac{-4\sigma}{\sqrt{\mu}(\bto{n_1}{8}-1)}}\right). \]
    Thus, we have proven the following
    
    \begin{theorem1} \label{thm:thm3}
      Assume there exists $n_1$ satisfying the bounds (\textbf{B}) and an $n_1$-strict interval $I_1$. Then, for all $N \in \mathbb{N}$, there exists $n > N$ and an $n$-strict interval $I$ such that
     \[  d_C(I) \geq d_C(I_1) \cdot \loss{n_1}. \]
    \end{theorem1}
    
    \section{Main Result}
    As in the previous section we continue to assume that $H$ is a $b$-happy function with digit mean $\mu$ and digit variance $\sigma^2$. Also, we assume that we have experimentally found a suitable starting $n_1$-strict interval, $I_1$, with large type-C density for some $C \subset \mathbb{N}$. As in Section 2, for positive integers $m$, let $Y_m$ denote the random variable induced by the interval $[0,b^m-1]$. 
    
    In this section we give a proof of the following
    
      \begin{theorem1} \label{thm:thm4}
     Suppose $I_1$ is $n_1$-strict, where $n_1$ satisfies bounds (\textbf{B}). Let $\bar{d}$ denote the upper density of the set of type-C integers. Then 
      \[\bar{d} \geq d_C(I_1) \cdot \loss{n_1}.\]
    \end{theorem1}
    The digit mean and digit variance for the case $(e,b)=(2,10)$ are $28.5$ and $721.05$ respectively. In this case, if $n>13$, then it satisfies bounds (\textbf{B}). After performing a computer search we find that the density of happy numbers in the interval $[10^{403},10^{404}-1]$ is at least $.185773$; thus, there exists a $404$-strict interval containing at least this density of happy numbers as a subset. Consider 
    \[\delta(n) := \Big(\frac{2}{1-\bto{n}{4}}+\frac{4\sigma}{\sqrt{\mu}(1-\bto{n}{8})}\Big).\] Plugging in the value for $n$, we find that $e^{\delta(404)} > 1-10^{-49}$. Thus, by Theorem \ref{thm:thm4}, the upper density of type-$\{1\}$ integers is at least $.1857729$. For the lower density, the type-$\{1\}$ density of $[10^{2367},10^{2368}-1]$ is at most $.11379$. This implies that there is a $2368$-strict interval with type-$\{4,16,37,58,89,145,42,20\}$ density at least $1 - .11379$ (recall that there are only two cycles for the $(2,10)$-function). We can then apply the main result to conclude that the upper density of type-$\{4,16,37,58,89,145,42,20\}$ integers is at least $1-.1138$. This gives the following
    \begin{cor}
    Let $\underline{d}$ and $\bar{d}$ be the lower and upper density of $(2,10)$-happy numbers respectively. Then $\underline{d} < .1138$ and $\bar{d} >  .18577$.
    \end{cor}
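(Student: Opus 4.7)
My plan is to combine Theorem~\ref{thm:thm3}, which yields arbitrarily large $n$-strict intervals of high type-$C$ density, with the local limit law (Theorem~\ref{thm:thm5}) via an averaging argument in the spirit of Lemma~\ref{lem:lem1}, in order to exhibit arbitrarily large $m$ for which $[0,b^m-1]$ has type-$C$ density close to $d_1\loss{n_1}$. Since this density equals $\mathbb{P}(Y_m\text{ is type-}C)=\mathbb{P}(H(Y_m)\text{ is type-}C)$ (type-$C$ being preserved under $H$), it is enough to find such $m$ with $\mathbb{P}(H(Y_m)\in S)$ large, where $S$ is a type-$C$ subset of a single interval $I$.

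First I would fix $\epsilon>0$ and use Theorem~\ref{thm:thm3} to obtain, for any target lower bound on $m$, an $n$ with $4\mid n$ and an $n$-strict interval $I=[i_1,i_1+b^{3n/4}-1]$ whose type-$C$ subset $S$ has density $d:=|S|/|I|\geq d_1\loss{n_1}$. I would then pick a truncation parameter $T$ and a range of integers $m_1\leq m\leq m_2$ chosen so that the Gaussian bodies of $H(Y_m)$, taken within $\pm T\sigma\sqrt{m}$ of the mean $\mu m$, collectively cover $I$. A short endpoint calculation gives $m_2-m_1+1=|I|/\mu+O(T\sigma\sqrt{m_2}/\mu)$.

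The core step is to estimate the average
\begin{equation*}
\bar P:=\frac{1}{m_2-m_1+1}\sum_{m=m_1}^{m_2}\mathbb{P}(H(Y_m)\in S)=\frac{1}{m_2-m_1+1}\sum_{i\in S}\sum_{m=m_1}^{m_2}P_{m,i}.
\end{equation*}
For each fixed $i$ in the interior of $I$, Theorem~\ref{thm:thm5} lets me treat the inner sum as a Riemann approximation to $\int\frac{1}{\sigma\sqrt{2\pi m}}\exp(-(i-\mu m)^2/(2\sigma^2 m))\,dm$; the substitution $t=(i-\mu m)/(\sigma\sqrt{m})$ converts this into $\frac{1}{\mu}(2\Phi(T)-1)(1+o(1))$. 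Summing over $i\in S$ then gives $\bar P=d(2\Phi(T)-1)(1+o(1))$, which can be made $\geq d-\epsilon$ by taking $T$ and then $n$ large. Some $m^*\in[m_1,m_2]$ must attain $\mathbb{P}(H(Y_{m^*})\in S)\geq\bar P$, and because $m^*\to\infty$ as $n\to\infty$, this yields $\bar d\geq d-\epsilon$ for every $\epsilon>0$, hence $\bar d\geq d_1\loss{n_1}$.

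The hardest part will be making the Riemann-sum-to-Gaussian-integral passage uniform over $i\in S$ and handling the edge effects near $i_1$ and $i_1+b^{3n/4}-1$, where the Gaussians only partially overlap $I$. I would manage both simultaneously by letting $T$ grow with $n$ so that $2\Phi(T)-1\to1$ while keeping $T\sigma\sqrt{m_2}\ll|I|$; since $m_2\approx b^n/\mu$ and $|I|=b^{3n/4}$, any $T$ growing up to roughly $b^{n/4}$ works, leaving ample room to absorb the $O(m^{-1/2})$ local-limit error and the truncation loss into the final $(1-o(1))$ factor. The constant $\loss{n_1}$ itself is \emph{not} degraded by this step; it is entirely inherited from Theorem~\ref{thm:thm3}.
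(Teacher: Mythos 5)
Your averaging argument over digit lengths $m$ is a genuinely different, and in some ways cleaner, route to the conclusion of Theorem~\ref{thm:thm4}. Where the paper uses Lemma~\ref{lem:lem2} to locate a single shift $k$ for which $\tau_k\big(H(Y_{m_1})\big)$ hits the type-$C$ set $S$, and then matches the distribution of $H(Y_{m_2})$ against the shifted one pointwise (the claim inside the proof of Theorem~\ref{thm:thm4}, supported by Lemma~\ref{lem:lem3}), you collapse everything into the double sum $\sum_{i\in S}\sum_m P_{m,i}$: the inner sum is handled by the substitution $t=(i-\mu m)/(\sigma\sqrt m)$ yielding $\frac{1}{\mu}\big(2\Phi(T)-1\big)(1+o(1))$, the outer sum by $|S|=d|I|$, and pigeonhole produces the good $m^*$. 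Both routes rely on Theorem~\ref{thm:thm3} and the local limit law in essentially the same way; yours trades the paper's three technical lemmas in \S4.1 for uniformity bookkeeping in the Riemann-sum passage and in letting $T$ grow with $n$, so the overall difficulty is comparable.

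However, what you have outlined is a proof of Theorem~\ref{thm:thm4}, not of the corollary, and two ingredients are missing. First, the corollary is a specific numerical claim: it needs the computed values $\mu=28.5$, $\sigma^2=721.05$ for $(2,10)$-happy numbers, the check that $n>13$ satisfies bound (\textbf{B}), the computer-derived fact that a $404$-strict interval has happy-number density $>.185773$, and the estimate $|\delta(404)|<10^{-49}$ on the loss factor; only then does Theorem~\ref{thm:thm4} deliver $\bar{d}>.18577$. Second, your argument is silent about $\underline{d}$, and that bound does not come from a lower bound on a $\limsup$. Since $(2,10)$ has exactly two cycles, type-$\{1\}$ and type-$\{4\}$ integers partition $\mathbb{Z}^+$, so $\underline{d}$ equals one minus the upper density of type-$\{4\}$ integers. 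Applying Theorem~\ref{thm:thm4} to a $2368$-strict interval with type-$\{4\}$ density $>1-.11379$ gives that upper density $>1-.1138$, hence $\underline{d}<.1138$. Both steps are essential and absent from your proposal.
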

    
     The proof of Theorem \ref{thm:thm4} is somewhat technical despite having a rather intuitive motivation. For the sake of clarity we first give a sketch of how to use Theorems \ref{thm:thm3} and \ref{thm:thm5} in order to prove a lower bound on the upper density of type-C numbers.
    
    Given our starting interval $I_1$, apply Theorem \ref{thm:thm3} to construct an $n$-strict interval $I$, where \[d_C(I) \geq (1-o(1))d_C(I_1).\] Do this with $n$ large enough as to make all the following error estimations arbitrarily small. Pick $m_1$ such that $\mu m_1$ (i.e., the mean of $H(Y_{m_1})$) lands in the interval $I$. Since $I \subseteq [b^{n-1},b^n]$, we have that $m_1 = \Theta(b^n)$. This implies that the standard deviation of $H(Y_{m_1})$ is roughly $\bto{n}{2}$. This will be much less than $|I| = \bto{3n}{4}$ and thus the bulk of the distribution of $H(Y_{m_1})$ will lie in the interval $I$. 
    
    Next, use Lemma \ref{lem:lem2} with a large $\lambda$ to find an integer $k$ for which \[ \mathbb{P}\left(\tau_k\big(H(Y_{m_1})\big) \text{ is type-C}\right) \geq (1-o(1))d_C(I_1).\] 
    Note that this $k$ will be smaller than $|I| = \bto{3n}{4}$ and that the mean of $\tau_k\big(H(Y_{m_1})\big)$ is equal to $\mu m_1 + k$. Clearly, there exists an integer $m_2$ such that
    \[|\mu m_2 - (\mu m_1 + k)| \leq \mu. \]
    Consider the random variable $H(Y_{m_2})$. The means of $H(Y_{m_2})$ and  $\tau_k \left( H(Y_{m_1})\right)$ are almost equal. Since $k$ is much smaller relative to  $m_1$ and $m_2$, the variance of these two distributions will be close. Furthermore, the distributions of $H(Y_{m_2})$ and $\tau_k \big( H(Y_{m_1}) \big)$ are asymptotically locally normal, so we may apply the local limit law to conclude that the distributions are point-wise close near the means.  Thus, \[\mathbb{P}\big( H(Y_{m_2}) \text{ is type-C}\big) \geq (1-o(1)) d_C(I_1).\]
     This implies that the interval $[0, b^{m_2}-1]$ has type-C density at least $d_C(I_1)\left(1-o(1)\right)$. Note that in the above analysis, we may take $n$ (and therefore $m_2$) to be arbitrarily large. This lower bounds the upper density of type-$C$ integers by $d_C(I_1)(1-o(1))$. In fact, the only contribution to the error term is from the application of Theorem \ref{thm:thm3} (the rest of the error tends to 0 as $n$ tends to infinity).
    
    \subsection{Some Lemmas}
    
    We will now begin to prove the main result. We have broken some of the pieces down for 3 lemmas. The proofs primarily consist of calculations and we leave them for after the proof of the main result. Note that Lemma \ref{lem:lem3} (part 1) is the only place where the local limit law is used.
    
    \begin{lemma1} \label{lem:lem31}
    There exists a sufficiently large $N$ such that, if $n > N$ and $I$ is an $n$-strict interval, then there exists $m \in \mathbb{N}$ with the property that
    \[ [\mu m - \sigma \xto{m}{5}{8}, \mu m + \sigma \xto{m}{5}{8}] \subseteq I. \]
    \end{lemma1}
    
    \begin{lemma1} \label{lem:lem32}
    Let $\epsilon > 0$ be given (assume as well that $\epsilon \leq 1$). Let $\lambda := \sqrt{\frac{6}{\epsilon}}$. Then there exists a sufficiently large $N$ such that, if $n, m_1,$ and $I$ all satisfy:
    \begin{itemize}
      \item $n > N$,
      \item $m_1 \in [\frac{b^{n-1}}{\mu}, \frac{b^{n}}{\mu}]$,
      \item I is $n$-strict,
    \end{itemize}
    then the following hold:       
    \begin{enumerate}
     \item $\lambda \leq \xto{m_1}{1}{8}$,
     \item $\left|1 - \frac{1}{1+ \frac{2\lambda \sigma \sqrt{m_1}}{\bto{3n}{4}}}\right| \leq \epfive$.
    \end{enumerate}
    
    \end{lemma1}

    \begin{lemma1} \label{lem:lem3}
    Let $\epsilon > 0$ be given (assume as well that $\epsilon \leq 1$). Let $T := \frac{2\sqrt{6}}{\sqrt{\epsilon}}$, and $\lambda := \frac{\sqrt{6}}{\sqrt{\epsilon}}$. Then there exists a sufficiently large $N$ such that, if $n, m_1, m_2, k$, and $I$ all satisfy:
    \begin{itemize}
      \item $n > N$,
      \item $m_1 \in [\frac{b^{n-1}}{\mu}, \frac{b^{n}}{\mu}], m_2 \in [\frac{b^{n-2}}{\mu}, \frac{b^{n+1}}{\mu}]$,
      \item $|k| \leq \bto{3n}{4}$,
      \item $|\mu m_1 + k - \mu m_2| \leq \mu$,
      \item I is $n$-strict,
    \end{itemize}
    then the following hold:       
    \begin{enumerate}
     \item For $i\in \{1,2\}$,     $\max\limits_{|t| \leq T} \left|1 - \frac{\mathbb{P}(H(Y_{m_i}) = \lfloor \mu m_i + t \sigma \sqrt{m_i} \rfloor)}{\frac{\xto{e}{t^2}{2}}{\sqrt{2 \pi m_i} \sigma }}\right| \leq \frac{\epsilon}{6}$.
     \item $\left|1- \sqrt{\frac{m_1}{m_2}}\right| \leq \frac{\epsilon}{6}$.
     \item For any real numbers $t_1$ and $t_2$, where $t_1 \in [\frac{-T}{2},\frac{T}{2}]$ and \[\mu m_1 + k + t_1 \sigma \sqrt{m_1} = \mu m_2 + t_2 \sigma \sqrt{m_2},\] it holds that $t_2 \in [-T,T]$ and $|1 - \xto{e}{{t_1}^2 - {t_2}^2}{2}| \leq \frac{\epsilon}{6}$. 
    \end{enumerate}
    \end{lemma1}
    
    \subsection{Proof of Theorem 4.1}
    
    \begin{proof}
    
    In order to lower bound the upper density of type-$C$ integers, it suffices to show that, for all $\epsilon > 0$ and $N_1 \in \mathbb{N}$, there exists $m > N_1$ such that \[d_C([0,b^m-1]) \geq d_C(I_1) \cdot \loss{n}(1 - \epsilon).\] Let $\epsilon$ and $N_1$ be arbitrary (with $\epsilon \leq 1$). Set $T := \frac{2\sqrt{6}}{\sqrt{\epsilon}}$. Also, in anticipation of applying Lemma \ref{lem:lem2}, set $\lambda := \frac{\sqrt{6}}{\sqrt{\epsilon}}$.
    
    First, pick $N >N_1$ large enough to apply Lemmas \ref{lem:lem31}, \ref{lem:lem32}, and \ref{lem:lem3}. By Theorem \ref{thm:thm3}, there exists an $n$-strict interval $I$, where $n > N$ and
    \begin{equation} \label{eq:eq7}
       d_C(I) \geq d_C(I_1) \cdot \loss{n_1}.
    \end{equation}
    
     For $m \in \mathbb{N}$, let \[J_m :=[\mu m - \sigma \xto{m}{5}{8}, \mu m + \sigma \xto{m}{5}{8}].\]
     Recall that $\textbf{E}[H(Y_m)] = \mu m$ and  $\textbf{Var}[H(Y_m)] = \sigma^2 m$.   
    Hence, $J_m$ is where the bulk of the distribution of $H(Y_m)$ lands. Pick $m_1$ such that $J_{m_1} \subseteq I$ (the existence of such $m_1$ is guaranteed by Lemma \ref{lem:lem31}). Note that $m_1 \in [\frac{b^{n-1}}{\mu},\frac{b^{n}}{\mu}]$ since $I$ is $n$-strict. Let $S$ be the set of type-$C$ integers in $I$.  Apply Lemma \ref{lem:lem2} on the random variable $H(Y_{m_1})$ to find an integer $k$ such that
    
    \begin{equation} \label{eq:eq8}
    \mathbb{P}\Big(\tau_k \big (H(Y_{m_1})\big) \in S \Big) \geq d_C(I) \left(1-\frac{1}{\lambda^2}\right)\Big(\frac{1}{1+\frac{2\lambda \sigma \sqrt{m_1}}{|I|}}\Big).
    \end{equation}
    
    Since $J_{m_1} \subseteq I$ and $|I| = \bto{3n}{4}$, it follows that $k \leq \bto{3n}{4}$.

    Let $\tau_k(J_{m_1}):=[a+k,b+k]$, where $J_{m_1} = [a,b]$. Let $S'$ be the set of type-$C$ integers in interval $\tau_k(J_{m_1})$. Recall the proof of Lemma \ref{lem:lem2}. In particular, we applied Lemma \ref{lem:lem1} after ignoring the tails of the distribution of $H(Y_{m_1})$ outside of $\lambda \sigma \sqrt{m_1}$ from the mean. Since $\lambda \leq \xto{m_1}{1}{8}$ (by Lemma \ref{lem:lem32}, part 1), we may replace (\ref{eq:eq8}) by the stronger conclusion that
    
    \[\sum\limits_{i \in S'}\mathbb{P}\Big(\tau_k \big(H(Y_{m_1})\big) = i \Big) \geq d_C(I) \Big(1-\frac{1}{\lambda^2}\Big)\Big(\frac{1}{1+\frac{2\lambda \sigma \sqrt{m_1}}{|I|}}\Big).\]
    
    Using the assumption that $\lambda = \sqrt{\frac{6}{\epsilon}}$ and  part 2 of Lemma \ref{lem:lem32}, we simplify the above as
     \begin{equation} \label{eq:eq9}
       \sum\limits_{i \in S'}\mathbb{P}\Big(\tau_k \big(H(Y_{m_1})\big) = i \Big) \geq d_C(I) \left(1-\epfive\right)^2.
    \end{equation}

   Now pick $m_2 \in \mathbb{N}$ such that $|m_1 \mu + k - m_2 \mu| \leq \mu$. Since $|k| \leq \bto{3n}{4}$, it follows that $m_2 \in [\frac{b^{n-2}}{\mu},\frac{b^{n+1}}{\mu}]$. In particular $m_1,m_2,n,k,$ and $I$ now all satisfy the conditions of Lemma \ref{lem:lem3}. It remains to show that near the mean of $\tau_k\big(H(Y_{m_1})\big)$, the distributions of $\tau_k\big(H(Y_{m_1})\big)$ and $H(Y_{m_2})$ are similar. This will imply that the interval $[0,b^{m_2}-1]$ contains a large density of type-$C$ integers. Making this precise, we prove the following
   
   \begin{claim}
   For integers $i \in \tau_k\big(J_{m_1}\big)$,
      \[\frac{\prtwo}{\prone} \geq \left(1-\epfive\right)^4.\]
   \end{claim}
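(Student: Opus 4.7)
The plan is to approximate $\mathbb{P}(H(Y_{m_2}) = i)$ and $\mathbb{P}(\tau_k(H(Y_{m_1})) = i) = \mathbb{P}(H(Y_{m_1}) = i - k)$ by their Gaussian densities via the local limit law and show that the ratio is close to $1$. For $i \in \tau_k(J_{m_1})$, I would parametrize
\[i - k = \mu m_1 + t_1 \sigma \sqrt{m_1}, \qquad i = \mu m_2 + t_2 \sigma \sqrt{m_2},\]
and verify that $|t_1| \leq T/2$. Lemma \ref{lem:lem3}(3) then delivers $|t_2| \leq T$ together with $|1 - e^{(t_2^2 - t_1^2)/2}| \leq \epsilon/6$.

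Applying Lemma \ref{lem:lem3}(1) to each side, I would write
\[\mathbb{P}\bigl(H(Y_{m_j}) = \lfloor \mu m_j + t_j \sigma \sqrt{m_j} \rfloor \bigr) = \frac{e^{-t_j^2/2}}{\sqrt{2 \pi m_j}\, \sigma}(1 + \eta_j), \qquad |\eta_j| \leq \epsilon/6,\]
for $j = 1, 2$. Dividing causes the $\sqrt{2\pi}\,\sigma$ factors to cancel and the result telescopes into three pieces:
\[\frac{\mathbb{P}(H(Y_{m_2}) = i)}{\mathbb{P}(\tau_k(H(Y_{m_1})) = i)} = \sqrt{\frac{m_1}{m_2}} \cdot e^{(t_1^2 - t_2^2)/2} \cdot \frac{1 + \eta_2}{1 + \eta_1}.\]
Lemma \ref{lem:lem3}(2) handles $\sqrt{m_1/m_2} \geq 1 - \epsilon/6$ directly. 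The exponential equals $1/e^{(t_2^2 - t_1^2)/2} \geq 1/(1 + \epsilon/6) \geq 1 - \epsilon/6$ via the elementary inequality $1/(1+x) \geq 1 - x$ (valid for $x \geq 0$). The remaining factor satisfies $(1 + \eta_2)/(1 + \eta_1) \geq (1 - \epsilon/6)/(1 + \epsilon/6) \geq (1 - \epsilon/6)^2$ by the same inequality, contributing two copies of $1 - \epsilon/6$. Multiplying the four copies together gives the claimed $(1 - \epsilon/6)^4$ lower bound.

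The main obstacle is the range check at the opening step. A literal reading of $i \in \tau_k(J_{m_1})$ only forces $|t_1| \leq m_1^{1/8}$, which far exceeds the regime where the local limit law of Theorem \ref{thm:thm5} provides a multiplicative Gaussian approximation. One has to observe that the Chebyshev truncation inside the proof of Lemma \ref{lem:lem2} effectively restricts attention to the narrower window of radius $\lambda \sigma \sqrt{m_1} = (T/2) \sigma \sqrt{m_1}$ around $\mu m_1 + k$, so the claim only needs to hold on that narrower set of $i$'s. Once this range question is settled, the three parts of Lemma \ref{lem:lem3} plug in mechanically and the rest is bookkeeping with $1 - \epsilon/6$ errors.
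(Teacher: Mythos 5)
Your proof is correct and matches the paper's own approach: parametrize $i$ by $t_1, t_2$, apply the local limit law bound (Lemma~\ref{lem:lem3}, part~1) to both probabilities, and split the resulting ratio into the $\sqrt{m_1/m_2}$ factor, the exponential factor, and the two $(1+\eta_j)$ error factors, each controlled to within $1-\epsilon/6$ by parts~(2), (3), and~(1) of Lemma~\ref{lem:lem3} respectively, giving the fourth power. Your observation about the range of $t_1$ is a genuine and correctly-diagnosed imprecision in the paper: the claim is stated for $i \in \tau_k(J_{m_1})$, which only forces $|t_1| \leq m_1^{1/8}$, yet the paper's own proof fixes $i$ and immediately invokes Lemma~\ref{lem:lem3}(3), whose hypothesis requires $|t_1| \leq T/2$; as you note, the argument both requires and only supports the claim on the narrower Chebyshev window of radius $\lambda\sigma\sqrt{m_1} = (T/2)\sigma\sqrt{m_1}$ around the shifted mean, which is the set actually used in the subsequent sum giving equation~(\ref{eq:eq9}), so the overall theorem is unaffected.
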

   \begin{proof}
   Let $i \in \tau_{k}(J_{m_1})$ be fixed and pick $t_1, t_2$ such that 
   \[ i = \mu m_1 + k + t_1 \sigma \sqrt{m_1} = \mu m_2 + t_2 \sigma \sqrt{m_2}. \]
   It is important now that we had chosen $\lambda = \frac{T}{2}$, this implies that $|t_2| \leq T$ (see Lemma \ref{lem:lem3} part 3). We can use the local limit law to estimate the distributions of $\tau_k\big(H(Y_{m_1})\big)$ and $H(Y_{m_2})$. By Lemma \ref{lem:lem3} part 1,
   \[\prtwo = \mathbb{P}\big(H(Y_{m_2}) = \mu m_2 + t_2 \sigma \sqrt{m_2}\big) \geq \frac{\xto{e}{{-t_2}^2}{2}}{2\pi \sigma \sqrt{m_2}}\left(1-\epfive\right)\]
    and
     \[\prone = \mathbb{P}\big(H(Y_{m_1}) = \mu m_1 + t_1 \sigma \sqrt{m_1}\big) \leq \frac{\xto{e}{{-t_1}^2}{2}}{2 \pi \sigma \sqrt{m_1}}\left(1+\epfive\right).\]
      Hence,
   
   \[ \frac{\prtwo}{\prone} \geq \text{exp}\left(({t_1}^2-{t_2}^2)/2\right) \frac{\sqrt{m_1}}{\sqrt{m_2}} \frac{(1-\epfive)}{(1+\epfive)}. \]
   The above, by Lemma \ref{lem:lem3} parts 2 and 3, is at least $\left(1-\epfive\right)^4.$
   \end{proof}
   Putting it all together, we have shown that
   \begin{align*} d_C([0,b^{m_2}-1]) & \geq \sum\limits_{i \in S'}\prtwo  \\
    & =  \sum\limits_{i \in S'}\frac{\prtwo}{\prone} \prone. \\   
   \tag{Claim 1} & \geq \left(1-\epfive\right)^4\sum\limits_{i \in S'} \prone  \\
   \tag{Equation \ref{eq:eq9}} & \geq d_C(I) \left(1-\epfive\right)^6 \\
   & \geq d_C(I)(1-\epsilon).
   \end{align*}  
   To conclude the proof, equation (\ref{eq:eq7}) implies that
   \[ d_C([0,b^{m_2}-1]) \geq d_C(I_1) \cdot \loss{n_1} (1-\epsilon).\] 
  
   \end{proof}
We conclude this section with the proofs of the lemmas used in the previous theorem.
    
\subsection*{Proof of Lemma \ref{lem:lem31}}
   
   \begin{proof}
     For $m \in \mathbb{N}$, let $J_m := [\mu m - \sigma \xto{m}{5}{8},\mu m + \sigma \xto{m}{5}{8}]$. If $I$ is an $n$-strict interval, then $I \subseteq [b^{n-1},b^n-1]$. Note that $\mu m \in I$ implies that $m = O(b^n)$. This in turn shows that \[|J_m| = O(\bto{5n}{8}) \ll |I| = \bto{3n}{4}.\] Comparing the growth rates of $|J_m|$ and $|I|$ it is clear that we can pick $N_1$ large enough such that $n > N_1$ implies that there exists $m$ with $J_m \subseteq I$.
    \end{proof}
    \subsection*{Proof of Lemma \ref{lem:lem32}}
    \begin{proof}
    We find $N_1, N_2$ for the two parts respectively and then choose $N = \max(N_1,N_2)$.
    
 \textbf{1.} $\lambda$ is a fixed constant here and it is assumed that $m_1 \geq \frac{b^{n-1}}{\mu}$, so the result is trivial (this gives $N_1$).

 \textbf{2.} For $x > 0$, to show that $\left|1-\frac{1}{1+x}\right| \leq \epfive$, it is equivalent to prove that
    \[ \left(1-\epfive\right)(1+x) \leq 1 \leq \left(1+ \epfive\right)(1+x). \]
    The above follows if $ x \leq \epfive$. Thus, the result will follow by finding $N$ large enough such that $\frac{2 \sigma \lambda \sqrt{m_1}}{\bto{3n}{4}} \leq \epfive$. Using the assumption that $m_1 \leq \frac{b^{n}}{\mu}$, we get
    \[ \frac{2 \sigma \lambda \sqrt{m_1}}{\bto{3n}{4}} \leq \frac{2 \sigma \lambda}{\sqrt{\mu} \bto{n}{4}}. \]
    This is equivalent to
    \[\frac{12 \sigma \lambda}{\sqrt{\mu} \epsilon} \leq \bto{n}{4}. \]
    Hence, picking $N_2 \geq 4 \log_b(\frac{12 \sigma \lambda}{\sqrt{\mu} \epsilon})$ suffices.
    
    \end{proof}
    
    \subsection*{Proof of Lemma \ref{lem:lem3}}
    \begin{proof}
    We first find $N_1, N_2$, and $N_3$ for the 3 parts respectively, and then define $N := \max(N_1, N_2, N_3)$.

   \textbf{1.} For each $m$, we have
    \[H(Y_{m}) = \sum\limits_{i=1}^{m} H(X_{i}),\]
    where each $X_i$ is uniform in the set $\{0,1,\cdots,b-1\}$. Recall that it is assumed that $H(0) = 0$ and $H(1) = 1$. In particular, the random variables $H(X_i)$ satisfy the aperiodic condition required by Theorem \ref{thm:thm5}. Thus, the result follows from applying Theorem \ref{thm:thm5} to the sum $\sum\limits_{i=1}^{m} H(X_{i})$ with finite interval $[-T,T]$. Fix $M$ large enough such that $m > M$ implies that the $O(\xto{m}{-1}{2})$ term in Theorem \ref{thm:thm5} is less than $\frac{\epsilon}{6}$. By assumption, we have that both $m_1$ and $m_2$ are larger than $\frac{b^{n-2}}{\mu}$. Hence, setting $N_1 = \log_b\left(\mu M\right) +2$ suffices.

    \textbf{2.} Ignoring the square root, it suffices to show that
    \begin{equation} \label {eq:eq10}
     \left|1 - \frac{m_1}{m_2}\right| \leq \epfive.
    \end{equation}
    By assumption
    \[\left|\mu m_1 + k - \mu m_2\right| \leq \mu.\]
    Dividing through by $\mu m_2$, it follows that
    \[\left|1 - \frac{m_1}{m_2} - \frac{k}{\mu m_2}\right| \leq \frac{1}{m_2}. \]
    This implies that
    \[ \frac{k}{\mu m_2} - \frac{1}{m_2} \leq 1 - \frac{m_1}{m_2} \leq \frac{1}{m_2} + \frac{k}{\mu m_2}. \]
    Thus, (\ref{eq:eq10}) follows from showing that $\frac{1}{m_2} + \frac{k}{\mu m_2} \leq \epfive$. Using the assumption that $m_2 \geq \frac{b^{n-2}}{\mu}$ and $|k| \leq \bto{3n}{4}$, it follows that
    \[ \frac{1}{m_2} + \frac{k}{\mu m_2} \leq \mu b^{-(n-2)} + b^{2-(n/4)}. \]
    Therefore, picking $N_2 := \max(\log_b(\frac{12\mu}{\epsilon})+2,4\log_b(\frac{12}{\epsilon})+2)$ suffices.

    \textbf{3.} We first find $N'$ such that $n > N'$ implies that $t_2 \in [-T,T]$. We start with the assumption that
    \[ \mu m_1 + k + t_1\sigma \sqrt{m_1} = \mu m_2 + t_2\sigma \sqrt{m_2}. \]
    Using the facts that $|\mu m_1 + k - \mu m_2| \leq \mu$ and $|t_1| \leq \frac{T}{2}$, this implies that
    \[|t_2| \leq \frac{\mu}{\sigma\sqrt{m_2}} + \frac{T \sqrt{m_1}}{2\sqrt{m_2}}. \]
    We assumed that $m_2 \geq \frac{b^{n-2}}{{\mu}}$. Also, in part (2) we showed that there exists $N_2$ such that $n > N_2$ implies that $\frac{\sqrt{m_1}}{\sqrt{m_2}} \leq \left(1 + \frac{\epsilon}{6}\right) \leq \frac{7}{6}$. Hence, if we take $N' > N_2$, it follows that
    
    \[|t_2| \leq \frac{\mu^2}{\sigma \bto{n-2}{2}} + \frac{7T}{12}. \]
    Pick $N' > N_2$ large enough such that $n > N'$ implies that $\frac{\mu^2}{\sigma \bto{n-2}{2}} \leq \frac{5T}{12}$. This will take care of the size of $t_2$.
    
     Now we must show that there exists $N''$ large enough such that $n > N''$ implies that
    
    \[ \left|1 - \xto{e}{{(t_1}^2 - {t_2}^2)}{2}\right| \leq \frac{\epsilon}{6}. \]
    
    For a real number $x$, if we wish to show that $|1- e^x| \leq \epfive$, it is equivalent to prove that
    \[ \ln \left(1 - \epfive\right) \leq x \leq \ln\left(1 + \epfive\right). \]
    Set $\epstar := \min\left(\ln\left(1+\epfive\right), \left|\ln\left(1 - \epfive\right)\right|\right)$. We find $N''$ such that $n > N''$ implies that
    \[\left|\frac{{t_2}^2 - {t_1}^2}{2} \right| \leq \epstar. \]
    It was assumed that 
    \[ \mu m_1 + k + t_1\sigma \sqrt{m_1} = \mu m_2 + t_2\sigma \sqrt{m_2}.  \]
    Equivalently
    \[ \mu m_1 + k - \mu m_2 = t_2 \sigma \sqrt{m_2} - t_1 \sigma \sqrt{m_1}. \]
    Applying the assumption that the left hand side is at most $\mu$ and dividing both sides by $\sigma \sqrt{m_2}$, we get
    \[ \left|t_2 - t_1 \sqrt{\frac{m_1}{m_2}}\right| \leq \frac{\mu}{\sqrt{m_2}\sigma}. \]
    Rearranging, this gives
    \[ \left|t_2 - t_1 + t_1(1 - \sqrt{\frac{m_1}{m_2}})\right| \leq \frac{\mu}{\sqrt{m_2}\sigma}. \]
    This implies that
    \[ |t_2 - t_1| \leq \frac{\mu}{\sqrt{m_2}\sigma} + \left|t_1(1-\sqrt{\frac{m_1}{m_2}})\right|. \]
    We assumed that $m_2 \geq \frac{b^{n-2}}{\mu}$ and $|t_1| \leq T$. By part (2), if we chose $N'' > N_2$, then \[\left|1- \sqrt{\frac{m_1}{m_2}}\right| \leq \mu b^{-(n-2)} + \bto{-(n-2)}{4}.\] Putting this together, it follows that
     \[ \left|\frac{{t_2}^2 - {t_1}^2}{2}\right| = \left|\left(\frac{t_2 + t_1}{2}\right)\left(t_2 - t_1\right)\right| \leq   T\Big( \frac{\xto{\mu}{3}{2} \bto{-(n-2)}{2}}{\sigma} + T(\mu b^{-(n-2)} + \bto{-(n-2)}{4})\Big). \]
     Now, since $T, \mu, \sigma$, and $b$ are all constants, it follows that the right hand side tends to $0$ as $n$ goes to infinity. Therefore, there exists $N''$ such that $n > N''$ implies that the right hand side is at most $\epstar$. Finally, set $N_3 := \max(N',N'')$.
    \end{proof}
    
    \section{Experimental Data}
    The data\footnote{Data generated by fellow graduate student, Patrick Devlin.} presented in this section is the result of short computer searches, so the bounds surely can be improved with more computing time. Floating point approximation with conservative rounding was used.
    \subsection{Finding an Appropriate $n$-strict Interval}

      If $n$ is divisible by 4 and the interval $[b^{n-1},b^n-1]$ has type-$C$ density $d$, then there exists an $n$-strict interval with type-$C$ density at least $d$ which we may apply Theorem \ref{thm:thm4} to. The type-$C$ density of $[b^{n-1},b^n-1]$ for various $n$ can be quickly calculated by first computing the densities of intervals of the form $[0,b^n-1]$; the algorithm which does this was discussed in Section 2. After an appropriate $n$-strict interval is found, we check to see that $n$ satisfies bounds (\textbf{B}), compute the error term, and find the desired bound. Our results show that in almost all cases, the asymptotic density of type-$C$ numbers does not exist.
     
    \subsection{Explanation of Results}
     The following information is given in tables (in the order of column in which they appear):
     
     \begin{enumerate}
     \item The cycle $C$ in which type-$C$ densities are being computed.
     \item The lower bound on the upper density (UD) implied by Theorem \ref{thm:thm4}.
     \item The upper bound on the lower density (LD) implied by Theorem \ref{thm:thm4}.
     \item The $n$ such that the interval $[b^{n-1},b^n -1]$ is used to find the bound (denoted as UD $n$ or LD $n$).
     \item The $ \delta(n) = \Big(\frac{2}{1-\bto{n}{4}}+\frac{4\sigma}{\sqrt{\mu}(1-\bto{n}{8})}\Big)$ part of the error term for Theorem \ref{thm:thm4} (we only present an upper bound on $|\delta(n)|$, the true number is always negative). In all cases the error is small enough not to affect the bounds as we only give precision of about 5 or 6 decimal places.  
\end{enumerate}        
     
    \subsubsection{Cubing the Digits in Base-10} \label{sec:cubing}
      In this case, if $n> 16$, then it satisfies bounds (\textbf{B}). Table 1 shows the results for the cycles when studying the $(3,10)$-happy function. There are 9 possible cycles. Figure 2 graphs the density of type-$\{1\}$ integers less than $10^n$. It is easy to prove, in this case, that $3 \mid n$ if an only if $n \text{ is type-}\{153\}$.
  
 \begin{table}[htpb]
    \caption{Bounds for the cycles appearing for the $(3,10)$-function}
    \begin{tabular}{ | l | l | l | l | l | l | l |}
    \hline
    Cycle & UD & LD & UD $n$ & LD $n$ & UD $\delta(n)$ & LD $\delta(n)$ \\ \hline
    \{1\} & $ > .028219$ & $ < .0049761$ & $10^{864}$ & $10^{132}$ & $ < 10^{-106}$ & $< 10^{-14}$ \\ \hline
    \{55,250,133\} & $ > .06029$ & $ < .0447701$ & $10^{208}$ & $10^{964}$ & $ < 10^{-24}$ & $< 10^{-118}$ 
  \\
    \hline
    
    \{136,244\} & $ > .024909$ & $ < .006398$ & $10^{204}$ & $10^{420}$ & $ < 10^{-23}$ & $< 10^{-51}$
    \\ \hline
    \{153\} & $ =\frac{1}{3}$ & $ = \frac{1}{3}$ & N/A & N/A & N/A & N/A
    \\ \hline
    
    \{160,217,352\} & $ > .050917$ & $ < .03184$ & $10^{160}$ & $10^{456}$ & $ < 10^{-18}$ & $< 10^{-56}$
    \\ \hline
    
    \{370\} & $ > .19905$ & $ < .16065$ & $10^{276}$ & $10^{560}$ & $ < 10^{-32}$ & $< 10^{-68}$
    \\ \hline
    
    \{371\} & $ > .30189$ & $ < .288001$ & $10^{836}$ & $10^{420}$ & $ < 10^{-102}$ & $< 10^{-50}$
    \\ \hline
    
    \{407\} & $ > .04532$ & $ < .0314401$ & $10^{420}$ & $10^{836}$ & $ < 10^{-50}$ & $< 10^{-103}$
    
    \\ \hline
    
    \{919,1459\} & $ > .04425$ & $ < .01843$ & $10^{916}$ & $10^{120}$ & $ < 10^{-112}$ & $< 10^{-13}$
    
    \\ \hline
    \end{tabular}
  \end{table}
  
   \begin{figure}[htb]
  \caption{Density of type-$\{1\}$ integers in the interval $[0,10^{n}-1]$ for the $(3,10)$-function}
  \includegraphics[scale=.6]{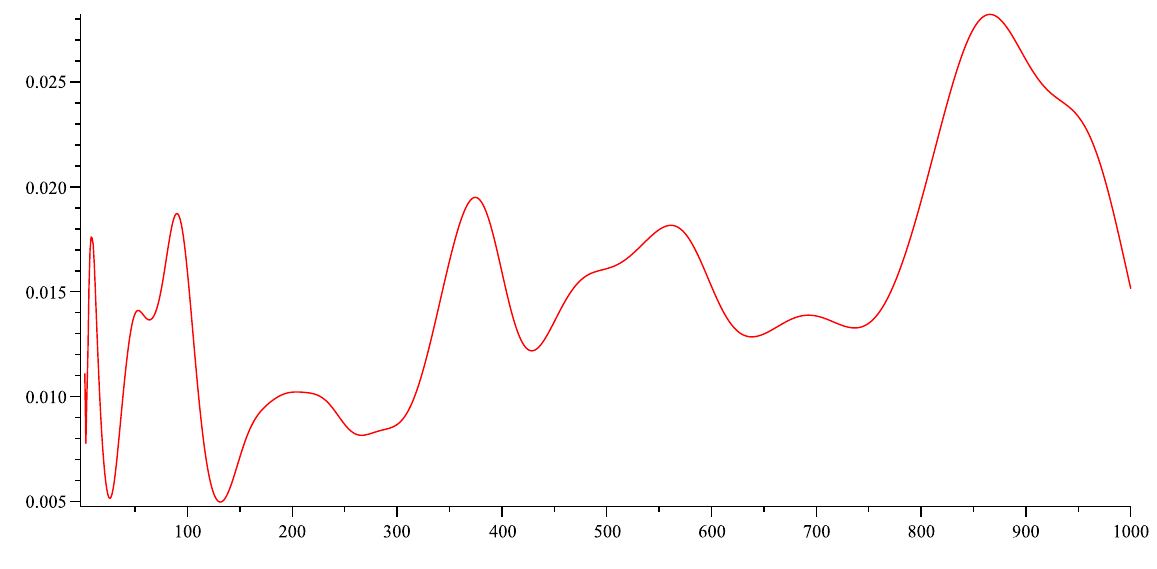} 
 \end{figure}
  
  \subsubsection{A More General Function}
  In order to emphasize the generality of Theorem \ref{thm:thm4}, we consider the function in base-$7$ with digit sequence $[0,1,7,4,17,9,13]$. There are only two cycles for this function, both are fixed points. Written in base-$10$ the cycles are $\{1\}$ and $\{20\}$. Figure 3 graphs the relative density of type-$\{1\}$ numbers. Table 2 shows the bounds derived. As there are only two cycles, we focus on the cycle $\{1\}$. In this case, if $n >12$, then it satisfies bounds (\textbf{B}).
  
 \begin{table}[!htb]
 \caption{Bounds for the cycles appearing for the function with digit sequence $[0, 1,7,4,17,9,13]$}
    \begin{tabular}{ | l | l | l | l | l | l | l |}
    \hline
    Cycle & UD & LD & UD $n$ & LD $n$ & UD $\delta(n)$ & LD $\delta(n)$ \\ \hline
    \{1\} & $ > .9858$ & $ < .94222$ & $7^{176}$ & $7^{384}$ & $ < 10^{-17}$ & $< 10^{-40}$ \\ \hline
    \end{tabular}
 \end{table}
 
 \begin{figure}[!htb]
  \caption{Density of type-$\{1\}$ integers in the interval $[0,7^n-1]$ for Digit Sequence $[0,1,7,4,17,9,13]$}
  \includegraphics[scale=.7]{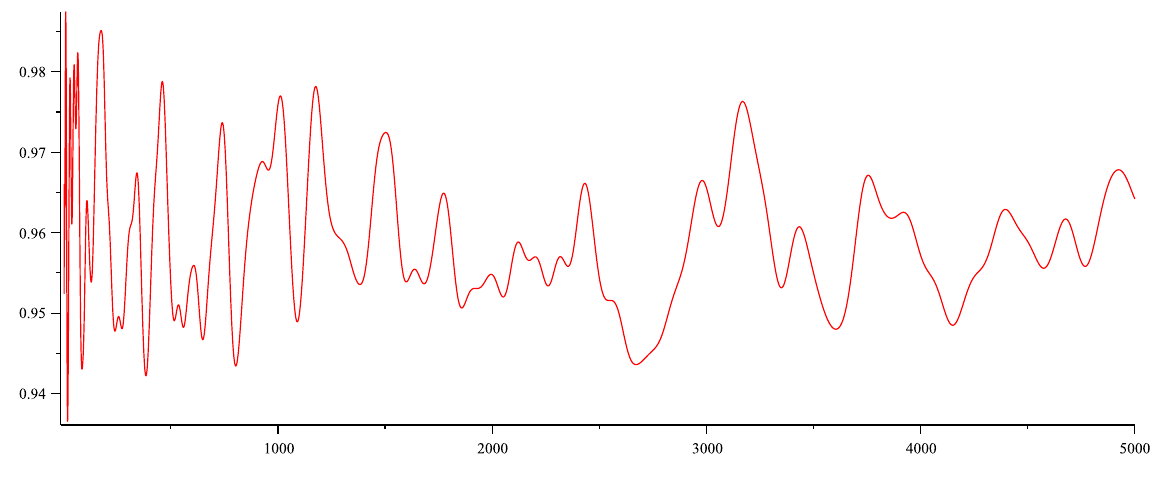} 
 \end{figure}
    
    \section{Appendix}
    
    \begin{lemma1} \label{lem:A1}
   Fix $a > 0$. Assume that $f:\mathbb{R}^{+} \rightarrow \mathbb{R}^{+}$ has continuous first and second derivatives such that, for all $x \in \mathbb{R}^{+}, f'(x) > 0$ and $f''(x) < 0$. Also, assume that $\lim\limits_{x\rightarrow \infty}f(x) = \infty$. Furthermore, suppose we have $x^{*} \in \mathbb{R}^{+}$ such that $f(x^{*} + 1) \leq a$. Then there exists $n \in \mathbb{N}$ such that $n \geq x^*$ and $0 \leq a - f(n) \leq f'(x^{*})$.
   \end{lemma1}
   \begin{proof}
    This follows from a first order Taylor approximation of the function $f$. Let $x^{*}$ such that $f(x^{*} + 1) \leq a$ be given. Set $n := \sup \{m \in \mathbb{N} | f(m) \leq a\}$. Since $f$ is strictly increasing and unbounded this $n$ exists. Note that $f(n) \leq a$ and $f(n+1) > a$. It also follows that $n \geq x^{*}$ as otherwise $\lceil x^{*} \rceil$ would be the supremum. By the concavity of $f$, we have 
    \[f(n+1) - f(n) \leq f'(n) \leq f'(x^*).\] However, $f(n+1) > a$, so we conclude that $0 \leq a - f(n) \leq f'(x^{*})$.
    \end{proof}

    \begin{lemma1}    \label{lem:A4}
    Let $n$ be a positive integer, $\lambda = \bto{n}{8}$, and $a \in [b^{n-1},b^{n}]$. Let $\mu$ and $\sigma$ be the digit mean and variance of some $b$-happy function $H$. Also, assume that $n$ satisfies bounds (\textbf{B}). Let $f(n) := 1 + \frac{3}{4}\mu n + \lambda \sigma \sqrt{\frac{3}{4}n}$. Then there exists an integer $n_2$ such that:
    \begin{itemize}
      \item $\frac{b^{n-1}}{\mu} \leq n_2 \leq \frac{4}{3\mu}b^n$,
      \item $4 \mid n_2$,
      \item $0 \leq a - f(n_2) \leq 3\mu + 1$.
    \end{itemize}
       
    \end{lemma1}
    
    \begin{proof}
      Since we require that $ 4 \mid n_2$, we apply Lemma \ref{lem:A1} on the function  \[g(m) = f(4m) = 1 + 3\mu m + \lambda \sigma \sqrt{3m}.\] Let $x^* := \frac{b^{n-1}}{4\mu}$. We first check that $g(x^* + 1) \leq a$. By assumption, $a \geq b^{n-1}$. Therefore, we need to show that
      \[ 1 + 3\mu\Big( \frac{b^{n-1}}{4\mu} + 1\Big) + \bto{n}{8} \sigma \sqrt{3\Big( \frac{b^{n-1}}{4\mu} + 1\Big)} \leq b^{n-1}. \]
      Simplifying the above, it suffices to show that
      \begin{equation} \label{eq:eqfinal}      
      1 + 3\mu + \bto{5n}{8} \sigma \sqrt{\frac{3}{4b\mu} + 3 b^{-n}} \leq \frac{b^{n-1}}{4}. 
      \end{equation}
      To keep the results of this paper as general as possible, we only assumed that $\mu \geq \frac{1}{b}$ (this would correspond to the quite uninteresting b-happy function $H$ which maps all digits to $0$ except for the digit $1$). Also it is clear that $b^{n} \geq 3$, and therefore 
      \[\frac{3}{4b\mu} + 3 b^{-n} \leq \frac{3}{4} + 1 \leq 2.\]
      
      Plugging this in and rearranging, we see that equation (\ref{eq:eqfinal}) follows if
      \[ 4(1 + 3\mu + \sqrt{2} \sigma \bto{5n}{8}) \leq b^{n-1}.\]
      This is exactly the bound (\textbf{B1}) and is true by assumption. Therefore, by Lemma \ref{lem:A1}, there exists $m \in \mathbb{N}$ such that 
      \[0 \leq a - g(m) \leq g'(x^*).\]
      Also,
      \begin{align*}
      g'(x^*) & =  3\mu + \frac{\sqrt{3}\sigma \bto{n}{8}}{2\sqrt{\frac{b^{n-1}}{4\mu}}} \\
      & = 3\mu + \sqrt{3\mu b}\sigma \bto{-3n}{8}. 
      \end{align*}
      Again, by the assumption (\textbf{B2}) on $n$, the previous statement is bounded above by $3\mu +1$.
      Set $n_2 := 4m$. Then $ 4 \mid n_2, n_2 \geq \frac{b^{n-1}}{\mu}$, and $0 \leq a - f(n_2) \leq 3\mu +1$. Finally, we note that $f(\frac{4}{3\mu}b^n) > a$ and, since $f$ is strictly increasing, we conclude that $n_2 \leq \frac{4}{3\mu}b^n$.
    \end{proof}

    \section{Acknowledgements}
    The author would like to thank Dr. Zeilberger, Dr. Saks, and Dr. Kopparty for their advice and support. He would also like to thank his fellow graduate students Simao Herdade and Kellen Myers for their help in editing, and fellow graduate student Patrick Devlin for generating the necessary data.

\end{document}